\theoremstyle{plain}
\newtheorem{exam}{Example}[section]
\newtheorem{theorem}[exam]{Theorem}
\newtheorem{lemma}[exam]{Lemma}
\newtheorem{remark}[exam]{Remark}
\begin{document}
\date{}
\title{  Solitary wave solutions of the delayed KP-BBM equation
}
\author{Yonghui Xia$^1$,\,\,\, Haojie Zhang$^1$\,\,\,and Hang Zheng$^2 $\footnote{  Corresponding author: Hang Zheng.  Email: zhenghang513@zjnu.edu.cn; zhenghwyxy@163.com.}\,\,\,\,  }
\maketitle
\noindent $^1$Department of Mathematics, Zhejiang Normal University,
Jinhua,
321004, China \\
$^2$ Department of Mathematics and Computer, Wuyi University, Wuyishan, 354300, China\\


\begin{center}
\begin{minipage}{140mm}
{\bf Abstract:} In this paper, we consider a kind of shallow water wave model called the Kadomtsev–Petviashvili–Benjamin–Bona–Mahony (KP-BBM) equation. We firstly consider the unperturbed KP-BBM equation. Then through the geometric singular perturbation (GSP) theory, especially the invariant manifold theory, method of dynamical system and Melnikov function,  we prove the persistence of solitary wave solutions for the delayed KP-BBM equation. In other words, we dissuss the equation under different nonlinear terms.  Finally, we validate our results with numerical simulations. \\
{\bf Keywords:}\ \ Kadomtsev–Petviashvili–Benjamin–Bona–Mahony equation, shallow water model, dealy, geometric singular perturbation theory

\end{minipage}
\end{center}

\section{\bf Introduction} As a kind  of nonlinear evolution equation, the water wave equations have attracted a lot of scholars’ attentions. Many famous of them have been extensively studied, including the  Korteweg-de Vries (KdV) equation, Benjamin-Bona-Mahony (BBM) equation, Kadomtsev–Petviashvili (KP) equation, Degasperis-Procesi (DP) equation, Novikov equation and  Camassa-Holm (CH) or rotation-Camassa-Holm
equation.\\
\indent Korteweg and de Vries observed a new kind of equation called the KdV equation in 1895 \cite{ref2}
\begin{equation*}
u_t+\alpha uu_x+u_{xxx}=0,
\end{equation*}
where the nonlinear term $uu_x$ in the equation causes the waveform to steepen and the dispersion effect term $u_{xxx}$ makes the waveform  expand. The interaction between them leads to the generation of solitons, while the dispersion effects are small enough to be ignored in the lowest order approximation. In particularly,  the Kadomtsov–Petviashivilli (KP) equation \cite{ref3} and the
Zakharov–Kuznetsov (ZK) equation \cite{ref4} are  two well-known generalizations of the KdV equations, given by
\begin{equation}
	\big(u_t+\alpha uu_x+u_{xxx}\big)_x+u_{yy}=0,
	\label{equation1}
\end{equation}
and
\begin{equation*}
	u_t+\alpha uu_x+(\nabla^2u)_x=0,
	\label{3}
\end{equation*}
respectively, where $\nabla^2={\partial_x}^2+{\partial_y}^2+{\partial_z}^2$ is the isotropic Laplacian. The KP equation is a partial differential
equation used to model shallow-water waves with weakly nonlinear restoring forces introduced by Kadomtsev and Petviashivilli.
Several analytical and numerical approaches were employed to solve the KP equation, and it was widely used in different physical applications. The KP equation  not only is used as a model for surface wave evolution, but also as a model for studying the propagation of ion sound waves in isotropic media \cite{ref5}. Saut and Tzvetkov \cite{ref6} studied the local well-posedness of higher-order KP equations.  They discussed the  problems on $2^{d}$ and $3^{d}$, given by
\begin{equation}
	\left\{\begin{array}{llll}
		\big(u_t+\alpha u_{xxx}+\beta u_{xxxx}+uu_x\big)_x+u_{yy}=0,
		\\
	u(0,x,y)=\phi(x,y),
	\end{array}
	\right.\label{equation2}
\end{equation}
 and
\begin{equation}
	\left\{\begin{array}{llll}
		\big(u_t+\alpha u_{xxx}+\beta u_{xxxx}+uu_x\big)_x+u_{yy}+u_{zz}=0,
		\\
		u(0,x,y)=\phi(x,y).
	\end{array}
	\right.\label{equation3}
\end{equation}
When $\beta=0$, the KP equation is divided into KP-I and KP-II \cite{ref7,ref8,ref9} based on different values of $\alpha$.\\
\indent The BBM equation, also known as the Regular Long Wave (RLW) equation, is another well-known shallow water wave equation which was investigated by Benjamin et al. \cite{ref1} for the first time
\begin{equation}
	u_t+u_x+uu_x-u_{xxt}=0.
	\label{equation4}
\end{equation}
It is an one-dimensional, unidirectional small amplitude long wave model that propagates in nonlinear dispersive media.
  Both KdV and BBM equations were
 applied to the following situations: long wave surface waves in liquids, acoustic waves in anharmonic crystals, and magnetic fluid waves in cold plasmas \cite{ref11}.\\
\indent  In this work, we study the variation problem of BBM in the KP sense, given by
\begin{equation}
(u_t+u_x-a(u^2)_x-bu_{xxt})_x+ku_{yy}=0,
	\label{equation5}
\end{equation}
where $a,b,k$ are constants. Wazwaz \cite{ref22,ref23} obtained some periodic solutions for the equation.
 Abdou \cite{ref24}  studied some exact solutions of equation ($\ref{equation5}$). Moreover, Tang et al. \cite{ref17} considered the  solitary wave solutions and periodic wave solutions of the generalized KP–BBM equation.\\
\indent Solitary  and kink wave solutions correspond to homoclinic and heteroclinic orbits in ordinary differential equations, respectively. Solitary wave solutions as a special kind of traveling wave solutions was frstly observed  by Russell in 1844 \cite{ref12}. It is well known that GSP theory \cite{ref14,ref30,ref31,ref34} is a powerful tool for dealing with singular perturbation problems because it can simplify singular perturbation systems into regular perturbation systems on invariant manifolds and ensure the existence of invariant manifolds. Many researchers have used it to study different nonlinear differential equations, including perturbed Gardner equation (Wen \cite{ref16}), generalized KdV equations (Yan et al. \cite{ref35}, Zhang et al. \cite{ref36}), perturbed BBM equations (Chen et al. \cite{ref37}, Sun and Yu \cite{ref41}, Fan and Wei \cite{ref10}  ), perturbed CH equations (Du and Li \cite{ref33}, Qiu et al. \cite{ref42}), FitzHugh-Nagumo equations (Liu et al. \cite{ref28}, Shen and Zhang \cite{ref15}), generalized Keller-Segel system (Du et al.
\cite{ref27}), perturbed sine-Gordon equation (Derks et al. \cite{ref32}), Schr$\ddot{o}$dinger equations (Xu et al. \cite{ref38}, Zheng and Xia \cite{ref26}, Li and Ji \cite{ref13}),
Belousov-Zhabotinskii system (Du and Qiao \cite{ref25}), the delayed DP equation
(Cheng and Li \cite{ref18}), biological models (Chen and Zhang \cite{ref19}, Wang and Zhang \cite{ref39}, Doelman et al. \cite{ref44}), and so on.\\
\indent The following single biological population model was investigated by Britton \cite{ref20}.
\begin{equation*}
u_t=u(1+\alpha u-(1+\alpha)g**u)+\Delta u,
\end{equation*}
 where $(g**u)(x,t)=\displaystyle\int_{-\infty}^{t}\displaystyle\int_{\Omega}g(x-y,t-\tau)u(y,\tau)dyd\tau.$ This model introduces convolution of space and time for the first time, and explains it is the drift of individuals from all possible positions in the past to their current positions. In addition to the Kuramoto-Sivashinsky (KS) perturbation, this spatio-temporal convolutional form of perturbation has attracted the attentions of many scholars and often appears in different models.  For example, Du et al. \cite{ref21} considered the  perturbed CH equation by using GSP theory.
  Ge and Wu \cite{ref29} used the same method to disscuss the delayed KdV equation. Moreover, Li et al. \cite{ref43} studied the following generalized CH-KP equation with local delay and calculated the equation with a nonlinear strength of one to verify the relevant results.
  \begin{equation}
(u_t-u_{xxt}+2ku_x-2a(f*u)u^{n-1}u_x+\tau u_{xx})_x+u_{yy}=0.
	\label{equation6*}
\end{equation}
\indent  We investigate the delayed KP-BBM equation in this paper, given by
\begin{equation}
(u_t+u_x-2a(f*u)u_x-bu_{xxt}+\tau u_{xx})_x+ku_{yy}=0,
	\label{equation6}
\end{equation}
where $f$ is a kernel function and $f*u$ represents spatial-temporal convolution with distributed delay.  Moreover,  $0<\tau\ll1$ and $\tau u_{xx}$ represents disturbance of viscous term. In this article, we discuss three cases of $f*u$:

(i) without delay
\begin{equation*}
f*u(x,y,t)=u(x,y,t);
\end{equation*}

(ii) with   local perturbed delay
\begin{equation*}
f*u(x,y,t)=\int_{-\infty}^tf(t-s)u(x,y,s)ds;
\end{equation*}

(iii) with  nonlocal perturbed delay
\begin{equation*}
f*u(x,y,t)=\int_{-\infty}^t\int_{-\infty}^{+\infty}f(x-z,t-s)u(z,y,s)dzds.
\end{equation*}
Note that, KP-BBM equation (7) is different from CH-KP equation (6) considered in Li et al. \cite{ref43}. Firstly, the coefficients of $u_{xxt}$ and $u_{yy}$ in CH-KP equation (6) are the determined numbers. However, the coefficients of $u_{xxt}$ and $u_{yy}$ in KP-BBM equation (7) are the parameters $b$ and $k$. As you will see, the two parameters   play great roles in the analysis of this paper, because they yield very significant results in this paper. That is, we consider a nonlinear PDE with more parameters and obtain  some  significant conclusions related to these parameters. Secondly, Li et al. \cite{ref43} considered  CH-KP equation (6) under locay delay. Different from Li et al. \cite{ref43}, we not only discuss the persistence of homoclinic orbits of the equation under local delay, but also consider whether its solitary wave solutions still exist under nonlocal delay. Thirdly, Li et al. \cite{ref43} did not calculate the specific  expressions of the corresponding Melnikov integral functions. However, we calculate the specific expressions of the corresponding Melnikov integral functions  and  use different methods to determine the existence of zeros.
 Finally, we find some profound differences between local and nonlocal delays, and the introductions of $b$ and $k$ will bring some difficulties to our proof. \\
\indent In section 2, we discuss solitary wave solutions of the unperturbed KP-BBM  equation. In Section 3, we study delayed KP-BBM equation by combining the GSP theory and Melnikov method. In Section 4, we validate our results with numerical simulations. In Section 5, we  provide the conclusion of the article.
\section{Unperturbed KP-BBM equation.}In this part, we discuss the unperturbed KP-BBM equation and give relevant results. Considering the following equation without delay and perturbation
\begin{equation}
	(u_t+u_x-2auu_x-bu_{xxt})_x+ku_{yy}=0.
	\label{equation11}
\end{equation}

Letting $u(x,y,t)=u(x+y-ct)=\phi(\xi)$.  We obtain
 \begin{equation}
(-c\phi'+\phi'-2a\phi\phi'+bc\phi''')'+k\phi''=0,
\label{equation12}
\end{equation}
where $ '=\frac{d}{d\xi}$ and $c>0$ is wave speed. By integrating the equation twice and taking the integration constant as $0$, we have
 \begin{equation}
-c\phi+\phi-a\phi^2+bc\phi''+k\phi=0,
	\label{equation13}
\end{equation}
 which is equivalent to
 \begin{equation}
	\left\{\begin{array}{llll}
	\phi'=\psi,
		\\
		\psi'=\frac{1}{bc}[(c-k-1)\phi+a\phi^2],
	
	\end{array}
	\right.\label{equation14}
\end{equation}
with the following first integral
\begin{equation}
	H(\phi,\psi)=\frac{1}{6}\big(3\psi^2+\frac{1}{bc}3\phi^2(k+1-c)-\frac{1}{bc}2a\phi^3\big).
	\label{equation15}
\end{equation}
Denote that
\begin{equation*}
	\phi_1=0,\phi_2=\frac{k+1-c}{a},
\end{equation*}
\begin{equation*}
	H(\phi_1,0)=h_1,H(\phi_2,0)=h_2.
\end{equation*}
Obviously, system (11) has two equilibrium $O_1(\phi_1,0)$ and $O_2(\phi_2,0)$.
\begin{theorem}\label{th3.1}
	If  $0\leq k+1<c$, $a<0$ and $b>0$,  system (11) has a homoclinic orbit $\Gamma$ surrounding
	the center point $(\phi_2,0)$ to one saddle point $(\phi_1,0)$ where level curves defined by $H(\phi,\psi)=h_1$(see Fig. 1).
	\end{theorem}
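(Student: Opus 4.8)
The plan is to treat system (11) as a planar Hamiltonian (conservative) system and to extract the homoclinic orbit directly from the level curves of the first integral $H$ in (12). Because every orbit is confined to a curve $H(\phi,\psi)=\mathrm{const}$, the problem reduces to a phase-plane analysis: classify the two equilibria, identify which level value yields a loop based at the saddle, and verify that this loop encircles the center.

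First I would linearize (11) at its equilibria. Its Jacobian is
\[
J(\phi)=\begin{pmatrix} 0 & 1 \\ \dfrac{1}{bc}\bigl[(c-k-1)+2a\phi\bigr] & 0 \end{pmatrix}.
\]
At $O_1=(0,0)$ the eigenvalues satisfy $\lambda^2=(c-k-1)/(bc)$; the hypotheses $c-k-1>0$, $b>0$, $c>0$ make this positive, so $O_1$ is a saddle. At $O_2=(\phi_2,0)$ one computes $(c-k-1)+2a\phi_2=-(c-k-1)$, hence $\lambda^2=-(c-k-1)/(bc)<0$ and the eigenvalues are purely imaginary. Since (11) is Hamiltonian, this linear center is a genuine nonlinear center, so $O_2$ is a center; moreover $a<0$ together with $k+1<c$ forces $\phi_2=(k+1-c)/a>0$, placing the center to the right of the saddle.

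Next I would study the level set through the saddle. As $H(0,0)=0$ we get $h_1=0$, and solving $H(\phi,\psi)=0$ for $\psi^2$ gives
\[
\psi^2=\frac{\phi^2}{bc}\Bigl[(c-k-1)+\tfrac{2a}{3}\phi\Bigr],
\]
so $\phi=0$ is a double root (as expected at a saddle connection) and the only other root is $\phi^{*}=3(c-k-1)/(-2a)>0$; comparison with $\phi_2=(c-k-1)/(-a)$ shows $\phi^{*}=\tfrac{3}{2}\phi_2>\phi_2$. On $0\le\phi\le\phi^{*}$ the bracket is nonnegative, so $\psi$ is real and the curve is a bounded loop emanating from and returning to $O_1$ while enclosing $O_2$; for $\phi>\phi^{*}$ the right-hand side is negative (no real $\psi$), and for $\phi<0$ it is positive but generates only the unbounded outer branches. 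Thus the bounded loop is exactly the homoclinic orbit $\Gamma$ of the statement.

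The main obstacle is less a single estimate than the rigorous assembly of this qualitative picture. The crucial point is confirming that the linear center at $O_2$ is a true center, which is precisely where the Hamiltonian structure is indispensable, since a center is not in general preserved under the higher-order terms. After that, the claimed decomposition of the zero level set, and in particular that its bounded branch is a single smooth closed curve rather than two disjoint arcs, follows cleanly from the explicit factorization of $\psi^2$ above together with the fixed signs dictated by $a<0$, $b>0$, and $0\le k+1<c$.
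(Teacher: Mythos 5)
Your proof is correct and follows essentially the same route as the paper: linearize at the two equilibria to identify the saddle at $(\phi_1,0)$ and the center at $(\phi_2,0)$, then read the homoclinic loop off the level set $H=h_1=0$. You in fact go further than the paper, which stops after the equilibrium classification; your explicit factorization $\psi^2=\frac{\phi^2}{bc}\bigl[(c-k-1)+\tfrac{2a}{3}\phi\bigr]$ and the identification $\phi^*=\tfrac{3}{2}\phi_2$ supply the level-set details that the paper leaves implicit.
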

\begin{proof}
	The linearized matrix of system (11) is
	\begin{equation}
		A=
		\begin{pmatrix}
			0&1\\
			\frac{c-k-1+2a\phi}{bc}&0
		\end{pmatrix}
	\end{equation}
	Thus, if $0\leq k+1<c$, $a<0$ and $b>0$,
		\begin{equation}
		J(\phi_1,0)=detA(\phi_1,0)=
		\begin{vmatrix}
			0&1\\
			\frac{c-k-1}{bc}&0
		\end{vmatrix}
		 =-\frac{c-k-1}{bc}<0, \quad TraceA_{(\phi_1,0)}=0,
	\end{equation}
	and
		\begin{equation}
		J(\phi_2,0)=detA(\phi_2,0)=
		\begin{vmatrix}
			0&1\\
			\frac{k+1-c}{bc}&0
		\end{vmatrix}
		=-\frac{k+1-c}{bc}>0, \quad TraceA_{(\phi_1,0)}=0.
	\end{equation}
	Through the above analysis, we obtain that
	$O_1(\phi_1,0)$ and $O_2(\phi_2,0)$ are saddle point and center point, respectively. Then, if the level curves defined
	by $H(\phi_1,0)=h_1$, there exists a homoclinic orbit $\Gamma$  surrounding the center
	point $O_2(\phi_2,0)$ to one saddle point $O_1(\phi_1,0)$ (see Figure 1).
\end{proof}

\begin{figure*}[htbp]
	\centering
	\includegraphics{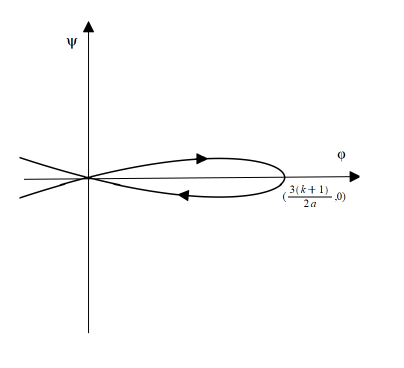}
	\caption{The homoclinic orbit within\ $0\leq k+1<c,a<0$ and $b>0$.}
	\label{fig1}
\end{figure*}

\section{Delayed KP-BBM equation.}In this part, we discuss the delayed KP-BBM equation and give relevant results.
\subsection{Model with local distributed delay}
We consider equation $(\ref{equation6})$ with local distributed delay. That is, the convolution $f*u $ will become the following form
\begin{center}
$f*u(x,y,t)=\displaystyle\int_{-\infty}^{t}f(t-s)u(x,y,s)ds,$
\end{center}
where $f(t)$ is usually represented by Gamma distributed delay kernel. The following two special cases occur frequently in differential delay equations
\begin{center}
$f(t)=\frac{1}{\tau}e^{-\frac{t}{\tau}}(n=1)$, $f(t)=\frac{4t}{\tau^2}e^{-\frac{2t}{\tau}}(n=2),$
\end{center}
where $\tau$ is called the average time delay. These two types of kernels are called weak kernels and strong kernels, respectively. Due to the similarity in the processing methods of the two types of kernel functions, we will only consider the second one in this article.\\
\indent Base on the above analysis, if $f(t)=\frac{4t}{\tau^2}e^{-\frac{2t}{\tau}}$, by letting $m=-\frac{2(t-s)}{\tau}$, we  have
\begin{equation*}
	\begin{aligned}
	&\lim\limits_{\tau \to 0}f*u(x,y,t)\\
	&=\lim\limits_{\tau \to 0}\displaystyle\int_{-\infty}^{t}f(t-s)u(x,y,s)ds\\
	     &=\lim\limits_{\tau \to 0}\displaystyle\int_{-\infty}^{t}\frac{4(t-s)}{\tau^2}e^{-\frac{2(t-s)}{\tau}}u(x,y,s)ds\\
	     &=\lim\limits_{\tau \to 0}\displaystyle\int_{-\infty}^{0}\frac{-2m\tau}{\tau^2}\cdot \frac{\tau}{2}\cdot e^mu(x,y,\frac{m\tau +2t}{2})dm\\
	     &=u(x,y,t)\displaystyle\int_{-\infty}^{0}-m \cdot e^mdm\\
	     &=u(x,y,t),
	\end{aligned}
\end{equation*}
and it is obvious that $\tau u_{xx}\rightarrow 0$ for $\tau \rightarrow 0$. The same result exists in the equation  with
nonlocal perturbation.\\
\indent In the case of  $u(x,y,t)=u(x+y-ct)=\phi(\xi)$, the following system is equivalent to equation $(\ref{equation6})$
\begin{equation}
(-c\phi'+\phi'-2a(\eta)\phi'+bc\phi'''+\tau\phi'')'+k\phi'' =0,
\label{equation19}
\end{equation}
where $0<\tau\ll1$ and
\begin{equation*}
	\begin{aligned}
		\eta(\xi)&=f*u(x,y,t)\\
		&=\displaystyle\int_{0}^{+\infty}\frac{4t}{\tau^2}e^{-\frac{2t}{\tau}}-u(x,y,t-s)dt\\
		&=\displaystyle\int_{0}^{+\infty}\frac{4t}{\tau^2}e^{-\frac{2t}{\tau}}\cdot \frac{1}{c}\cdot u(\xi)dt\\
		&=\displaystyle\int_{0}^{+\infty}\frac{4t}{\tau^2}e^{-\frac{2t}{\tau}}\cdot \frac{1}{c}\cdot \phi(\xi)dt\\
		&=\displaystyle\int_{0}^{+\infty}\frac{4t}{\tau^2}e^{-\frac{2t}{\tau}} \phi(\xi+ct)dt.
	\end{aligned}
	\label{equation11111}
\end{equation*}	
 Differentiating $\eta(\xi)$  with respect to $\xi$
 \begin{equation*}
	\begin{aligned}
		\frac{d\eta}{d\xi}&=\displaystyle\int_{0}^{+\infty}\frac{4t}{\tau^2}e^{-\frac{2t}{\tau}} \phi_{\xi}(\xi+ct)dt\\
		&=\displaystyle\int_{0}^{+\infty}\frac{4t}{\tau^2}e^{-\frac{2t}{\tau}}\cdot \frac{1}{c}\cdot \phi_{t}(\xi+ct)dt\\
		&=\frac{1}{c}\displaystyle\int_{0}^{+\infty}\frac{4t}{\tau^2}e^{-\frac{2t}{\tau}}d\phi\\
		&=\frac{1}{c}\big[\frac{2}{\tau}\displaystyle\int_{0}^{+\infty}\frac{4t}{\tau^2}e^{-\frac{2t}{\tau}} \phi(\xi+ct)dt-\frac{1}{\tau}\displaystyle\int_{0}^{+\infty}\frac{4}{\tau}e^{-\frac{2t}{\tau}} \phi(\xi+ct)dt\big]\\
		&=\frac{1}{c\tau}(2\eta-\zeta),
	\end{aligned}
	\label{equation22}
\end{equation*}	
where
\begin{center}
$\zeta=\displaystyle\int_{0}^{+\infty}\frac{4}{\tau}e^{-\frac{2t}{\tau}}\phi(\xi+ct) dt.$
\end{center}
Similarly, we have
\begin{center}
$\frac{d\zeta}{d\xi}=\frac{1}{c\tau}(2\zeta-4\phi).$
\end{center}
Thus, by integrating both sides of equation $(\ref{equation19})$ and taking the integration constant as $0$, we have
\begin{equation}
-c\phi'+\phi'-2a(\eta)\phi'+bc\phi'''+\tau\phi''+k\phi' =0.
\label{equation21}
\end{equation}
It is obvious that system $(\ref{equation21})$ reduces to the following  singularly perturbed system
\begin{equation}
\left\{\begin{array}{llll}
\phi'=\psi,
\\
\psi'=\omega,
\\
\omega'=\frac{1}{bc}[(c-k-1)\psi+2a\eta\psi-\tau \omega\big)],
\\
c\tau \eta'=2\eta-\zeta,
\\
c\tau \zeta'=2\zeta-4\phi,
\end{array}
\right.\label{equation22}
\end{equation}
where $'=\frac{d}{d\xi}$.  When $\tau \rightarrow 0$, we obtain $\eta \rightarrow \phi$ and system ($\ref{equation21}$) will be converted to unperturbed   equation ($\ref{equation12}$). When $\tau>0$, we  have the following corresponding fast system through time scale transformation
\begin{equation}
\left\{\begin{array}{llll}
\dot \phi=\tau \psi,
\\
\dot \psi=\tau \omega,
\\
\dot \omega=\frac{\tau}{bc}[(c-k-1)\psi+2a\eta\psi-\tau \omega],
\\
\dot \eta=\frac{1}{c}(2\eta-\zeta),
\\
\dot \zeta=\frac{1}{c}(2\zeta-4\phi),
\end{array}
\right.\label{equation23}
\end{equation}
where $\xi=\tau z$ and $\dot{}=\frac{d}{dz}$. Setting $\tau \rightarrow 0$ in both slow system ($\ref{equation22}$) and fast system ($\ref{equation23}$), we obtain the reduced system
\begin{equation}
\left\{\begin{array}{llll}
\phi'=\psi,
\\
\psi'=\omega,
\\
\omega'=\frac{1}{bc}[(c-k-1)\psi+2a\eta\psi],
\\
0=2\eta-\zeta,
\\
0=2\zeta-4\phi,
\end{array}
\right.\label{equation24}
\end{equation}
and the layer system
\begin{equation}
\left\{\begin{array}{llll}
\dot \phi=0,
\\
\dot \psi=0,
\\
\dot \omega=0,
\\
\dot \eta=\frac{1}{c}(2\eta-\zeta),
\\
\dot \zeta=\frac{1}{c}(2\zeta-4\phi).
\end{array}
\right.\label{equation25}
\end{equation}
Therefore, the critical manifold is
 \begin{center}
$M_0=\big \{ (\phi,\psi,\omega,\eta,\zeta)\in {\mathbb{R}}^5 : \eta=\phi, \zeta=2\phi \big \}.$
\end{center}
Then by a direct calculation, the linearization of ($\ref{equation23}$) with $\tau=0$ is
\begin{equation*}
\begin{pmatrix}
0&0&0&0&0\\
0&0&0&0&0\\
0&0&0&0&0\\
0&0&0&\frac{2}{c}&-\frac{1}{c}\\
-\frac{4}{c}&0&0&0&\frac{2}{c}
\end{pmatrix}
\end{equation*}
where the number of zero eigenvalues of the matrix is equal to the dimension of $M_0$. According to the GSP theory of Fenichel \cite{ref14,ref31}, $M_0$ is a normally hyperbolic manifold.  Therefore, if $0<\tau \ll 1$, we obtain the following three dimensional invariant manifold $M_\tau$
\begin{center}
$M_\tau=\big \{ (\phi,\psi,\omega,\eta,\zeta)\in {\mathbb{R}}^5 : \eta=\phi+g(\phi,\psi,\omega,\tau), \zeta=2\phi+h(\phi,\psi,\omega,\tau)\big \},$
\end{center}
where  $g$ and $h$ are smooth  on a compact domain. If $\tau=0$, the value of the both functions are zeros.
Now we expand both functions with respect to $\tau$
 \begin{equation}
\left\{\begin{array}{llll}
g(\phi,\psi,\omega,\tau)=\tau g_1(\phi,\psi,\omega)+\tau^2 g_2(\phi,\psi,\omega)+\cdots\ ,
\\
h(\phi,\psi,\omega,\tau)=\tau h_1(\phi,\psi,\omega)+\tau^2 h_2(\phi,\psi,\omega)+\cdots.
\end{array}
\right.\label{equation26}
\end{equation}
Base on the set representation of $M_\tau$, we  obtain
 \begin{equation}
\left\{\begin{array}{llll}
\dot \eta=\dot \phi  +\frac{\partial g}{\partial \phi} \dot\phi+\frac{\partial g}{\partial \psi} \dot\psi+\frac{\partial g}{\partial \omega} \dot\omega,
\\
\dot \zeta=2\dot\phi+\frac{\partial h}{\partial \phi} \dot\phi+\frac{\partial h}{\partial \psi} \dot\psi+\frac{\partial h}{\partial \omega} \dot\omega,
\end{array}
\right.\label{equation27}
\end{equation}
and
 \begin{equation}
\left\{\begin{array}{llll}
\dot \eta=\frac{1}{c}(2\eta-\zeta),
\\
\dot \zeta=\frac{2}{c}(\zeta-2\phi).
\end{array}
\right.\label{equation28}
\end{equation}
Substituting system ($\ref{equation26}$) into system ($\ref{equation27}$), one has
\begin{equation}
\left\{\begin{array}{llll}
\dot \eta=\tau \psi+o(\tau),
\\
\dot \zeta=2\tau\psi +o(\tau).
\end{array}
\right.\label{equation29}
\end{equation}
Similarly, substituting $M_\tau$ and system ($\ref{equation26}$) into system ($\ref{equation28}$), we get
 \begin{equation}
\left\{\begin{array}{llll}
\dot \eta=\frac{1}{c}[(2g_1-h_1)\tau+o(\tau)],
\\
\dot \zeta=\frac{2}{c}(h_1\tau+o(\tau)).
\end{array}
\right.\label{equation30}
\end{equation}
Comparing the coefficient of $\tau$ for systems ($\ref{equation29}$) and ($\ref{equation30}$), we have
\begin{center}
$g= c\tau\psi+o(\tau),h=c\tau\psi+o(\tau).$
\end{center}
 Restricted to $M_\tau$, the slow system is given by
\begin{equation}
\left\{\begin{array}{llll}
\phi'=\psi,
\\
\psi'=\omega,
\\
\omega'=\frac{1}{bc}[(c-k-1)\psi+2a\phi\psi+2ac\tau\psi^2-\tau\omega+o(\tau)].
\end{array}
\right.\label{equation31}
\end{equation}
 Obviously,  system ($\ref{equation31}$) has a line of equilibrium taking the form of
\begin{center}
	$\{\psi=0,\omega=0\}.$
\end{center}
Accoding to ($\ref{equation13}$), we have
\begin{center}
$-c\phi+\phi-a\phi^2+bc\omega+k\phi=0$.
\end{center}
Now we use variable transformation to get the normal form of  system ($\ref{equation31}$)
\begin{equation*}
\left\{\begin{array}{llll}
\widetilde \phi=\phi,
\\
\widetilde\psi=\psi,
\\
\widetilde\omega=(c-k-1)\phi+a\phi^2-bc\omega.
\end{array}
\right.\label{eq}
\end{equation*}
Thus, $\widetilde \phi,\widetilde\psi$ and $\widetilde\omega$ satisfy
\begin{equation*}
\left\{\begin{array}{llll}
\widetilde \phi'=\widetilde\psi,
\\
\widetilde\psi'=\frac{1}{bc}[(c-k-1)\widetilde\phi+a\widetilde\phi^2-\widetilde\omega],
\\
\widetilde\omega'=-2ac\tau\widetilde\psi^2+\frac{\tau}{bc}[(c-k-1)\widetilde\phi+a\widetilde\phi^2-\widetilde\omega]+o(\tau).
\end{array}
\right.\label{eq}
\end{equation*}
 We remove the superscript title for the convenience of  research.
\begin{equation}
	\left\{\begin{array}{llll}
		\ \phi'=\psi,
		\\
		\psi'=\frac{1}{bc}[(c-k-1)\phi+a\phi^2-\omega],
		\\
		\omega'=-2ac\tau\psi^2+\frac{\tau}{bc}[(c-k-1)\phi+a\phi^2-\omega]+o(\tau).
	\end{array}
	\right.\label{equation32}
\end{equation}
Similarly, system ($\ref{equation32}$) also has a line of equilibrium  $\{\psi=0,\phi=\frac{(k+1-c)-\sqrt{(c-k-1)^2+4a\omega}}{2a}\}.$

 It is not difficult to verify that $\widetilde M_0\equiv\{\psi=0,\phi=\frac{(k+1-c)-\sqrt{(c-k-1)^2+4a\omega}}{2a},|\omega|\leq\delta\}$ is  normally hyperbolic. Similar to the analysis of $M_0$, if $0<\tau\ll1$,  $\widetilde M_\tau$ persists and is $C^1O(\tau)$ close to $\widetilde M_0$.\\
\indent Obviously, if $\tau=0$, system ($\ref{equation32}$) is a Hamiltonian system. Then we investigate  the differences between the disturbed orbit $\Gamma_\tau$ and the undisturbed homoclinic
 orbit $\Gamma$. Moreover, $\Gamma$ satisfies
  \begin{equation}
	\left\{\begin{array}{llll}
	\phi'=\psi,
		\\
		\psi'=\frac{1}{bc}[(c-k-1)\phi+a\phi^2],
	
	\end{array}
	\right.\label{equation33}
\end{equation}
and
\begin{center}
$3\psi^2+\frac{\phi^2}{bc}[3(k+1-c)-2a\phi]=0$.
\end{center}
\begin{lemma}\cite{ref40}\label{le4.2}
Consider the following equations
 \begin{equation}
\left\{\begin{array}{llll}
x^{\prime}=u_0(x,y,\textbf{z})+\epsilon u_1 (x,y,\textbf{z})+O(\epsilon ^2),
\\
y^{\prime} =v_0(x,y,\textbf{z})+\epsilon v_1 (x,y,\textbf{z})+O(\epsilon ^2),
\\
\textbf{z}^{\prime} =\epsilon r (x,y,\textbf{z})+O(\epsilon ^2),
\end{array}
\right.\label{eq5}
\end{equation}
where $\textbf{z}$ is a vector, $x$ and $y$ are scalars. Let $T=(x,y),$ $f_0 =(u_0,v_0),$ $f_1 =(u_1,v_1).$\\
\indent If $f_0$ as a function of $T$ is divergence free and the equations above satisfy the saddle connection assumption, then we have
\begin{center}
$\Delta _1 (0,\textbf{z}_0)=\displaystyle\int_{-\infty}^{+\infty} v_0 u_1-u_0 v_1+(v_0 \frac{\partial{u_0}}{\partial{\textbf{z}}}-u_0 \frac{\partial{v_0}}{\partial{\textbf{z}}})\cdot \frac{\partial{\textbf{z}}}{\partial{\epsilon}}dt$,\\

\end{center}
where $\frac{\partial{\textbf{z}}}{\partial{\epsilon}}$  satisfes $(\frac{\partial{\textbf{z}}}{\partial{\epsilon}})'=r(T_0(t),\textbf{z}_0),$ and $\frac{\partial{\textbf{z}}}{\partial{\epsilon}}=0,$ at $t=0.$
\end{lemma}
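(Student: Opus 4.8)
The plan is to run the classical Melnikov computation, adapted to the slow--fast setting in which the vector $\textbf{z}$ drifts only at order $\epsilon$. First I would fix $\textbf{z}_0$ and let $T_0(t)$ parametrize the saddle connection of the unperturbed planar system $T'=f_0(T,\textbf{z}_0)$ supplied by the saddle connection assumption. For the perturbed system I write the orbits lying on the unstable and stable manifolds of the perturbed saddles as $T^{u,s}(t)=T_0(t)+\epsilon T_1^{u,s}(t)+O(\epsilon^2)$ and $\textbf{z}^{u,s}(t)=\textbf{z}_0+\epsilon\,\textbf{z}_1^{u,s}(t)+O(\epsilon^2)$. Expanding $\textbf{z}'=\epsilon r+O(\epsilon^2)$ at first order produces exactly $(\textbf{z}_1)'=r(T_0(t),\textbf{z}_0)$ with $\textbf{z}_1(0)=0$, the auxiliary equation in the statement; this identifies $\textbf{z}_1$ with $\partial\textbf{z}/\partial\epsilon$.

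Next I would measure the separation of the two manifolds at the section $t=0$ by the planar wedge product. Introduce $M(t)=f_0(T_0(t),\textbf{z}_0)\wedge T_1(t)$, with $a\wedge b=a_1b_2-a_2b_1$, so that the leading-order splitting is proportional to $M^u(0)-M^s(0)$. The core of the argument is to differentiate $M$ along the flow: using $(T_0)'=f_0$ and the first-order variational equation
\begin{equation*}
(T_1)'=D_Tf_0(T_0,\textbf{z}_0)\,T_1+f_1(T_0,\textbf{z}_0)+\frac{\partial f_0}{\partial\textbf{z}}(T_0,\textbf{z}_0)\cdot\textbf{z}_1,
\end{equation*}
a direct computation gives
\begin{equation*}
M'(t)=\big(D_Tf_0\,f_0\big)\wedge T_1+f_0\wedge\big(D_Tf_0\,T_1\big)+f_0\wedge f_1+f_0\wedge\Big(\frac{\partial f_0}{\partial\textbf{z}}\cdot\textbf{z}_1\Big).
\end{equation*}
Here I invoke the algebraic identity $(Aa)\wedge b+a\wedge(Ab)=(\mathrm{tr}\,A)\,(a\wedge b)$, valid for every $2\times2$ matrix $A$; taking $A=D_Tf_0$ collapses the first two terms into $(\nabla\cdot f_0)\,M$, and the divergence-free hypothesis on $f_0$ annihilates this contribution.

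What remains is $M'(t)=f_0\wedge f_1+f_0\wedge(\partial_{\textbf{z}}f_0\cdot\textbf{z}_1)$. Integrating over $(-\infty,0]$ along the unstable branch and over $[0,+\infty)$ along the stable branch, then discarding the boundary values at $t=\pm\infty$ by the saddle connection assumption, I add the two halves to obtain $\Delta_1(0,\textbf{z}_0)=\int_{-\infty}^{+\infty}\big[f_0\wedge f_1+f_0\wedge(\partial_{\textbf{z}}f_0\cdot\textbf{z}_1)\big]\,dt$. Writing out the wedge products with $f_0=(u_0,v_0)$ and $f_1=(u_1,v_1)$, and fixing the orientation so that $f_0\wedge f_1=v_0u_1-u_0v_1$, this is precisely the claimed expression.

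I expect the genuine obstacle to be the convergence of the improper integral and the vanishing of the boundary terms, rather than the algebra. Because $T_0(t)$ approaches the hyperbolic saddle exponentially, both $f_0(T_0(t))$ and $\partial_{\textbf{z}}f_0(T_0(t))$ decay like $e^{-\lambda|t|}$; however $\textbf{z}_1(t)=\int_0^t r(T_0(s),\textbf{z}_0)\,ds$ may grow linearly, since $r$ need not vanish at the saddle. The point to verify carefully is that this at-most-linear growth is dominated by the exponential decay of $f_0$, guaranteeing $M^u(-\infty)=M^s(+\infty)=0$ and the absolute convergence of both improper integrals; this is exactly where the hyperbolicity encoded in the saddle connection assumption does the work.
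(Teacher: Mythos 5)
The paper offers no proof of this lemma at all---it is quoted directly from Robinson \cite{ref40}---and your argument is precisely the standard derivation used there: first-order expansion of the stable/unstable orbits, the wedge-product Melnikov function $M=f_0\wedge T_1$, the identity $(Aa)\wedge b+a\wedge(Ab)=(\mathrm{tr}\,A)(a\wedge b)$ combined with the divergence-free hypothesis to kill the variational term, and the exponential approach of $T_0(t)$ to the saddle to control both the boundary terms and the at-most-linear growth of $\partial\textbf{z}/\partial\epsilon$. The only blemishes are cosmetic: your two stated orientation conventions for $a\wedge b$ are inconsistent with each other (the displayed formula corresponds to $T_1\wedge f_0$ with the convention $a\wedge b=a_1b_2-a_2b_1$, not to $f_0\wedge T_1$), and $\partial_{\textbf{z}}f_0(T_0(t),\textbf{z}_0)$ need not decay --- it converges exponentially to its generally nonzero value at the saddle --- but the integrand still decays exponentially because every term in the wedge carries a factor of a component of $f_0$, which does vanish at the saddle.
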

\begin{theorem}
  There is a unique speed $c$, such that system ($\ref{equation32}$) has a  homoclinic orbit $\Gamma_\tau$ which connects with two equilibrium $O_1$ and $O_2$ for sufficiently small $\tau>0$. It lies  $C^1$ $O(\tau)$ close to the undisturbed homoclinic orbit $\Gamma$ $(\Gamma_\tau \rightarrow \Gamma$ in $C^1$ norm as $\tau \rightarrow 0).$ Therefore,  equation ($\ref{equation6}$) with  special local distributed delay has a solitary wave solution $\Gamma_\tau$.
\end{theorem}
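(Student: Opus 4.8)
The plan is to treat system~(\ref{equation32}) with $\tau$ as the small parameter and to detect the persistence of the homoclinic loop by a Melnikov computation via Lemma~\ref{le4.2}, using the wave speed $c$ as the selection parameter. At $\tau=0$ the system is Hamiltonian and decouples: $\omega$ is a first integral, and on the level $\omega=0$ it reduces to the planar system~(\ref{equation33}), whose homoclinic orbit $\Gamma$ to the saddle $O_1$ is guaranteed by Theorem~\ref{th3.1}. Turning on $\tau>0$ destroys the integral $\omega$, so the stable and unstable manifolds of the perturbed saddle (which sits on the normally hyperbolic slow manifold $\widetilde M_\tau$) will in general no longer coincide; the leading-order gap between them is $\tau\,\Delta_1(c)+O(\tau^2)$. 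Hence the whole proof reduces to showing that $\Delta_1(c)$ has a unique simple zero $c=c^\ast$ in the admissible range $c>k+1$.

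First I would write $\Gamma$ explicitly. Combining $3\psi^2+\frac{\phi^2}{bc}[3(k+1-c)-2a\phi]=0$ with $\phi'=\psi$ gives the sech profile
$$\phi_0(\xi)=A\operatorname{sech}^2(B\xi),\qquad A=\frac{3(c-k-1)}{-2a},\quad B=\tfrac12\sqrt{\tfrac{c-k-1}{bc}},$$
with $A,B>0$ under the hypotheses $0\le k+1<c$, $a<0$, $b>0$; note $\phi_0$ is even and $\psi_0=\phi_0'$ is odd in $\xi$. Next I would match~(\ref{equation32}) to Lemma~\ref{le4.2} with $x=\phi$, $y=\psi$, $\mathbf z=\omega$, $\epsilon=\tau$, reading off $u_0=\psi$, $v_0=\frac{1}{bc}[(c-k-1)\phi+a\phi^2-\omega]$, $u_1=v_1=0$ and $r=-2ac\,\psi^2+\frac{1}{bc}[(c-k-1)\phi+a\phi^2-\omega]$. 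The field $f_0=(u_0,v_0)$ is divergence free in $(\phi,\psi)$ and the saddle connection holds at $\mathbf z_0=\omega=0$, so the lemma applies; since $u_1=v_1=0$, $\partial u_0/\partial\omega=0$ and $\partial v_0/\partial\omega=-1/bc$, it collapses to
$$\Delta_1(c)=\frac{1}{bc}\int_{-\infty}^{+\infty}\psi_0(t)\,\frac{\partial\omega}{\partial\tau}(t)\,dt,\qquad \frac{\partial\omega}{\partial\tau}(t)=\int_0^t\Bigl[-2ac\,\psi_0^2(s)+\psi_0'(s)\Bigr]ds.$$

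Then I would evaluate $\Delta_1(c)$. Because $\int_0^t\psi_0'\,ds=\psi_0(t)$, the adjoint variable is odd, the integrand is even, and the integral is genuinely nonzero. An integration by parts ($\psi_0=\phi_0'$, with vanishing boundary terms) reduces the double integral to $-\int\phi_0\psi_0^2\,dt$, giving
$$\Delta_1(c)=\frac{1}{bc}\Bigl[2ac\int_{-\infty}^{+\infty}\phi_0\psi_0^2\,dt+\int_{-\infty}^{+\infty}\psi_0^2\,dt\Bigr].$$
The two sech moments are elementary, namely $\int\psi_0^2\,dt=\frac{16A^2B}{15}$ and $\int\phi_0\psi_0^2\,dt=\frac{64A^3B}{105}$, and substituting $A,B$ yields, after simplification,
$$\Delta_1(c)=\frac{16A^2B}{105\,bc}\bigl(7-12\,c(c-k-1)\bigr).$$
As the prefactor is strictly positive, $\Delta_1(c)=0$ is equivalent to $12c(c-k-1)=7$; since $c\mapsto c(c-k-1)$ is strictly increasing for $c>k+1$, there is exactly one admissible root $c^\ast=\tfrac12\bigl[(k+1)+\sqrt{(k+1)^2+7/3}\bigr]>k+1$, and $\Delta_1'(c^\ast)\neq0$, so the zero is simple.

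Finally, the simple zero together with the Melnikov/implicit-function argument yields, for every sufficiently small $\tau>0$, a unique speed $c(\tau)=c^\ast+O(\tau)$ for which the stable and unstable manifolds of the saddle reconnect; the resulting orbit $\Gamma_\tau$ is homoclinic to $O_1$ (surrounding $O_2$), and by Fenichel's theorem $\widetilde M_\tau$, hence $\Gamma_\tau$, is $C^1$ $O(\tau)$-close to $\Gamma$, so $\Gamma_\tau\to\Gamma$ in the $C^1$ norm as $\tau\to0$. Undoing the coordinate changes identifies $\Gamma_\tau$ with a solitary wave of the delayed KP-BBM equation~(\ref{equation6}). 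I expect the main obstacle to be the correct construction and use of the adjoint variable $\partial\omega/\partial\tau$ inside Lemma~\ref{le4.2}, together with the integration-by-parts reduction that makes the double integral computable; once $\Delta_1$ is reduced to the two sech moments, the sign analysis pinning down the unique simple root $c^\ast$ is routine.
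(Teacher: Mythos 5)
Your proposal is correct and follows the same skeleton as the paper's proof: reduce to the slow flow on $\widetilde M_\tau$, apply Lemma~\ref{le4.2} with the adjoint variable $\partial\omega/\partial\tau$ solving $\frac{d}{d\xi}\bigl(\frac{\partial\omega}{\partial\tau}\bigr)=-2ac\psi^2+\psi'$ along $\Gamma$, and integrate by parts to arrive at $M=\frac{1}{bc}\bigl[2ac\int\phi\psi^2\,d\xi+\int\psi^2\,d\xi\bigr]$, which is exactly the paper's $\frac{2}{bc}\int_0^{\phi^*}(2ac\phi+1)\sqrt{\frac{2a\phi^3-3(k+1-c)\phi^2}{3bc}}\,d\phi$. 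Where you genuinely diverge is the endgame. The paper stops at this integral and locates a zero only qualitatively: it checks that $M>0$ as $c\to k+1$ and $M<0$ as $c\to+\infty$, invokes continuity to get some $c^*$, and simply \emph{asserts} $\frac{\partial\Delta}{\partial c}\big|_{c^*}\neq0$; uniqueness is never actually proved in the general case (it is verified only in the numerical example $a=-1$, $b=1$, $k=-1$). You instead substitute the explicit profile $\phi_0=A\,\mathrm{sech}^2(B\xi)$ and evaluate the two moments in closed form, obtaining $\Delta_1(c)=\frac{16A^2B}{105\,bc}\bigl(7-12c(c-k-1)\bigr)$, whence the unique simple root $c^*=\frac12\bigl[(k+1)+\sqrt{(k+1)^2+7/3}\bigr]$ with $\Delta_1'(c^*)\neq0$ by monotonicity of $c\mapsto c(c-k-1)$ on $c>k+1\ge 0$. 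Your formula is consistent with the paper's special case (it reduces to $-\frac{72}{35}c^3+\frac{6}{5}c$ and $c^*=\sqrt{7/12}$ at $k=-1$), so your route is not only valid but strictly stronger: it supplies the uniqueness and nondegeneracy of $c^*$ that the theorem claims but the paper's general proof leaves unjustified, at the modest cost of computing two elementary $\mathrm{sech}$ integrals.
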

\begin{proof}
Constrained to plane $\{\omega=0\}$, on a cross section where $(\phi,\psi)=(\frac{3(k+1-c)}{2a},0)$, the N$(\tau)$ signed distance between $\Gamma_\tau$ and $\Gamma$ is given by the following equation
\begin{center}
	$N(c,\tau)=\tau M+O(\tau^2)$.
\end{center}According to Lemma 3.1, we obtain
\begin{center}
	$M(\phi,\psi)=\frac{1}{bc}\displaystyle \int_{-\infty}^{+\infty}\psi \frac{ \partial \omega}{\partial \tau}d\xi$,
\end{center}
where $\phi,\psi$ are evaluated along $\Gamma$, and $\frac{ \partial \omega}{\partial \tau}$ satisfies
\begin{equation}
	\left\{\begin{array}{llll}
		\frac{d}{d\xi}(\frac{ \partial \omega}{\partial \tau})=-2ac\psi^2+\frac{1}{bc}[(c-k-1)\phi+a\phi^2-\omega],
		\\
		\frac{ \partial \omega}{\partial \tau}=0 \ at\  \xi=0.
	\end{array}
	\right.\label{equation34}
\end{equation}
According to part integral, $M(\phi,\psi)$ satisfies
\begin{equation}
	\begin{aligned}
		M(\phi,\psi)&=\frac{1}{bc}\int_{-\infty}^{+\infty}\psi \frac{ \partial \omega}{\partial \tau}d\xi\\
		&=\frac{1}{bc}\int_{-\infty}^{+\infty} \frac{ \partial \omega}{\partial \tau}d\phi\\
		&=\frac{1}{bc} \frac{ \partial \omega}{\partial \tau}\phi\big|_{-\infty}^{+\infty}-\frac{1}{bc}\int_{-\infty}^{+\infty}
		\phi \frac{d}{d\xi}(\frac{ \partial \omega}{\partial \tau})d\xi. \\
	\end{aligned}
\end{equation}
Since $\frac{d}{d\xi}(\frac{ \partial \omega}{\partial \tau})=-2ac\psi^2+\frac{1}{bc}[(c-k-1)\phi+a\phi^2-\omega]$  is bounded and exponentially small at $\pm\infty$, $\frac{ \partial \omega}{\partial \tau}$ increase at most sub-exponentially at $\pm\infty$. we get
\begin{equation}
	\begin{aligned}
		M(\phi,\psi)&=\frac{1}{bc}\int_{-\infty}^{+\infty}
		\phi [-2ac\psi^2+\psi']d\xi \\
		&=\frac{2a}{b}\int_{-\infty}^{+\infty}\phi\psi^2d\xi-\frac{1}{bc}\int_{-\infty}^{+\infty}\phi d\psi\\
		&=\frac{2a}{b}\int_{-\infty}^{+\infty}\phi\psi d\phi+\frac{1}{bc}\int_{-\infty}^{+\infty}\psi d\phi\\
		&=\frac{1}{bc}\int_{-\infty}^{+\infty}(2ac\phi+1)\psi d\phi\\
		&=\frac{2}{bc}\int_{0}^{\phi^*}(2ac\phi+1)\sqrt{\frac{2a\phi^3-3(k+1-c)\phi^2}{3bc}}d\phi\\
		&=\Delta(c),
	\end{aligned}
	\label{equation37}
\end{equation}
where $\phi^*=\frac{3(k+1-c)}{2a}$, and notice that $M(\phi,\psi)=\Delta(c)$ is continuous in $c$. If $c\rightarrow k+1$, then the first term of the integral $(2ac\phi+1)$ is always positive, resulting
the corresponding value of $M(\phi,\psi)$ is positive. If $c\rightarrow +\infty$,   then $(2ac\phi+1)$ is negative, resulting the integral function $M(\phi,\psi)$ is negative. It follows that  there exist a $c^*$ such that
\begin{center}
		$\Delta (c)=\Delta (c^*)=0, and$ $\frac{\partial \Delta}{\partial c}\big|_{c^*}\neq 0$.
	\end{center}
Therefore, for each  $0<\tau\ll1$ , by using implicit function theorem, there exists a $c=c^*(\tau)$ such that  $M(\phi,\psi)=0$. As a result, the distance function $N(c,\tau)$ has at least one zero solution for every $c>0$. That is, equation ($\ref{equation6}$) with  special local distributed delay has a solitary wave solution.
\end{proof}
\begin{remark}
The local delay term $f*u(x,y,t)$ and the perturbation term $\tau u_{xx}$ play  important roles in the existence of solitary wave solutions. Through simple calculations, the Melnikov integral function without the local delay is given by
\begin{center}
	$M(\phi,\psi)=\frac{2}{bc}\displaystyle\int_{0}^{\phi^*}\sqrt{\frac{2a\phi^3-3(k+1-c)\phi^2}{3bc}}d\phi$.
\end{center}
Notice that it keeps postive for $c>0$.  On the other hand, without disturbance of viscous term $\tau u_{xx}$, the corresponding Melnikov integral function is
\begin{center}
	$M(\phi,\psi)=\frac{2}{bc}\displaystyle\int_{0}^{\phi^*}2ac\phi\sqrt{\frac{2a\phi^3-3(k+1-c)\phi^2}{3bc}}d\phi$.
\end{center}
It keeps negative for $c>0$. We  see that the perturbation
drives $M(\phi,\psi)$ positive while the delaying effect drives $M(\phi,\psi)$ negative. Due to their combined effect, a unique solitary wave solution is generated.
\end{remark}

\subsection{Model with nonlocal distributed delay}
We discuss equation $(\ref{equation6})$ with nonlocal distributed delay which taking the form of
\begin{equation*}
f*u(x,y,t)=\int_{-\infty}^t\int_{-\infty}^{+\infty}f(x-z,t-s)u(z,y,s)dzds.
\end{equation*}
The  kernel function $f:(-\infty,+\infty)\times[0,+\infty)\rightarrow [0,+\infty)$ satisfes
\begin{center}
$\displaystyle\int_{0}^{+\infty}\displaystyle\int_{-\infty}^{+\infty}f(x,t)dxdt=1.$
\end{center}
We consider the special weak generic delay kernel $f(x,t)=\frac{1}{\sqrt{4\pi t}}e^{-\frac{x^2}{4t}}\frac{1}{\tau}e^{-\frac{t}{\tau}}$, then for  $0<\tau\ll1$, the convolution $f*u $ is given by
\begin{center}
$v(x,y,t)=f*u(x,y,t)=\displaystyle\int_{-\infty}^{t}\displaystyle\int_{-\infty}^{+\infty}\frac{1}{\sqrt4\pi(t-s)}e^{-\frac{(x-z)^2}{4(t-s)}}\frac{1}{\tau}e^{-\frac{t-s}{\tau}}u(z,y,s)dzds.$
\end{center}
By  direct calculations, we have the following equation that holds
\begin{center}
$v_t-v_{xx}=\frac{1}{\tau}(u-v)$.
\end{center}
Therefore, equation ($\ref{equation6}$) is equivalent to
\begin{equation}
	\left\{\begin{array}{llll}
		u_t+u_x-2avu_x-bu_{xxt}+\tau u_{xx}+ku_y=0,
		\\
	v_t-v_{xx}=\frac{1}{\tau}(u-v).
	\end{array}
	\right.\label{eq22}
\end{equation}
Letting $u(x,y,t)=\phi(\xi)$, $v(x,y,t)=\eta(\xi)$ and $\xi=x+y-ct$. Then $\phi$ and $\eta$ satisfy
\begin{equation}
	\left\{\begin{array}{llll}
	-c\phi'+\phi'-2a\eta\phi'+bc\phi'''+\tau\phi''+k\phi'=0,
		\\
	-c\eta'-\eta''=\frac{1}{\tau}(\phi-\eta),
	\end{array}
	\right.\label{equation39}
\end{equation}
where $ ’=\frac{d}{d\xi}$. Obviously,  system $(\ref{equation39})$ reduces to the following  singularly perturbed system
\begin{equation}
	\left\{\begin{array}{llll}
\phi'=\psi,
		\\
	\psi'=\omega,
	\\
	\omega'=\frac{1}{bc}[(c-k-1)\psi+2a\eta\psi-\tau\omega],
	\\
	\eta'=\widetilde\zeta,
	\\
	{\widetilde\zeta}'=-c\widetilde\zeta+\frac{1}{\tau}(\eta-\phi).
	\end{array}
	\right.\label{equation40}
\end{equation}
By $\epsilon=\sqrt{\tau}$ and $\zeta=\epsilon\widetilde\zeta$, we get
\begin{equation}
	\left\{\begin{array}{llll}
\phi'=\psi,
		\\
	\psi'=\omega,
	\\
	\omega'=\frac{1}{bc}[(c-k-1)\psi+2a\eta\psi-\epsilon^2\omega],
	\\
	\epsilon\eta'=\zeta,
	\\
	\epsilon\zeta'=-c\epsilon\zeta+\eta-\phi,
	\end{array}
	\right.\label{equation41}
\end{equation}
which is equivalent to
\begin{equation}
	\left\{\begin{array}{llll}
\dot\phi=\epsilon\psi,
		\\
	\dot\psi=\epsilon\omega,
	\\
	\dot\omega=\frac{\epsilon}{bc}[(c-k-1)\psi+2a\eta\psi-\epsilon^2\omega],
	\\
	\dot\eta=\zeta,
	\\
	\dot\zeta=-c\epsilon\zeta+\eta-\phi,
	\end{array}
	\right.\label{equation42}
\end{equation}
where $\dot{{}}=\frac{d}{dz}$ and $\xi=\epsilon z.$  If $\epsilon$ is set to zero in system ($\ref{equation42}$),
the  critical manifold is given by
\begin{center}
	$M_0=\big \{ (\phi,\psi,\omega,\eta,\zeta)\in {\mathbb{R}}^5 : \eta=\phi,\zeta=0\big \}.$
\end{center}

Similarly, we obtain the following linearized matrix of system ($\ref{equation42}$)
with $\epsilon=0$
\begin{equation*}
	\begin{pmatrix}
		0&0&0&0&0\\
		0&0&0&0&0\\
		0&0&0&0&0\\
		0&0&0&0&1\\
		-1&0&0&1&0
	\end{pmatrix}
\end{equation*}
where the number of zero eigenvalues of the matrix is equal to the dimension of $M_0$. According to the GSP theory of Fenichel \cite{ref14,ref31}, $M_0$ is a normally hyperbolic manifold.  Therefore, if $0<\epsilon \ll 1$, we obtain the three dimensional invariant manifold $M_\epsilon$, given by
\begin{center}
	$M_\epsilon=\big \{ (\phi,\psi,\omega,\eta,\zeta)\in {\mathbb{R}}^5 : \eta=\phi+g(\phi,\psi,\omega,\epsilon),\zeta=0+h(\phi,\psi,\omega,\epsilon)\big \},$
\end{center}
where  $g$ and $h$ are smooth  on a compact domain. Similarly, by comparing the coefficients of $\epsilon$ and its higher-order terms, we have
\begin{center}
	$g=(c\psi+\omega)\epsilon^2+o(\epsilon^2),\quad h=\epsilon\psi +o(\epsilon).$
\end{center}
 Restricted to $M_\epsilon$, the slow system is given by
\begin{equation}
	\left\{\begin{array}{llll}
		\phi'=\psi,
		\\
	\psi'=\omega,
	\\
	\omega'=\frac{1}{bc}[(c-k-1)\psi+2a\psi\phi+2a\psi(c\psi+\omega)\epsilon^2-\epsilon^2\omega]+o(\epsilon^2).
	\end{array}
	\right.\label{equation442}
\end{equation}
In order to obtain the standard form of singular perturbation in system ($\ref{equation442}$).
Let
\begin{equation*}
	\left\{\begin{array}{llll}
		\widetilde\phi=\phi,
		\\
		\widetilde\psi=\psi,
		\\
		\widetilde\omega=(c-k-1)\phi+a\phi^2-bc\omega,
	\end{array}
	\right.\label{eq37}
\end{equation*}
then we have
\begin{equation*}
	\left\{\begin{array}{llll}
		\widetilde\phi'=\widetilde\psi,
		\\
		\widetilde\psi'=\frac{1}{bc}[(c-k-1)\widetilde\phi+a\widetilde\phi^2-\widetilde\omega],
		\\
		\widetilde\omega'=-2ac\widetilde\psi^2\epsilon^2-(2a\widetilde\psi-1)\frac{\epsilon^2}{bc}[(c-k-1)\widetilde\phi+a\widetilde\phi^2-\widetilde\omega]+o(\epsilon^2).
	\end{array}
	\right.\label{eq1111}
\end{equation*}	
Again suppress the superscript tilde for the sake of simplicity
\begin{equation*}
	\left\{\begin{array}{llll}
		\phi'=\psi,
		\\
		\psi'=\frac{1}{bc}[(c-k-1)\phi+a\phi^2-\omega],
		\\
	\omega'=-2ac\psi^2\epsilon^2-(2a\psi-1)\frac{\epsilon^2}{bc}[(c-k-1)\phi+a\phi^2-\omega]+o(\epsilon^2),
	\end{array}
	\right.\label{eq1111}
\end{equation*}	
and in the form of involving $\tau$
\begin{equation}
	\left\{\begin{array}{llll}
		\phi'=\psi,
		\\
		\psi'=\frac{1}{bc}[(c-k-1)\phi+a\phi^2-\omega],
		\\
		\omega'=-2ac\psi^2\tau-(2a\psi-1)\frac{\tau}{bc}[(c-k-1)\phi+a\phi^2-\omega]+o(\tau).
	\end{array}
	\right.\label{equation43}
\end{equation}
\begin{theorem}
  There is a unique speed $c$, such that  system ($\ref{equation43}$) has a  homoclinic orbit which connects with two equilibrium $O_1$ and $O_2$ for sufficiently small $\tau>0$. It lies  $C^1$ $O(\tau)$ close to the undisturbed homoclinic orbit $\Gamma$ $(\Gamma_\tau \rightarrow \Gamma$ in $C^1$ norm as $\tau \rightarrow 0)$. Therefore,  equation ($\ref{equation6}$) with  special nonlocal distributed delay has a solitary wave solution $\Gamma_\tau$.
\end{theorem}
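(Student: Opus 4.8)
The plan is to mirror the Melnikov argument used for the local-delay case treated above, now applied to the reduced slow system (\ref{equation43}). First I would record that when $\tau=0$ system (\ref{equation43}) is exactly the Hamiltonian system (\ref{equation33}), so it possesses the same saddle $O_1$, center $O_2$, and homoclinic loop $\Gamma$ furnished by Theorem \ref{th3.1}; since the reduction onto the three-dimensional invariant manifold and its $C^1\,O(\tau)$-closeness to the critical manifold have already been established, it remains only to detect when the perturbed stable and unstable manifolds of the saddle reconnect. I would measure this by the signed distance $N(c,\tau)=\tau M(c)+O(\tau^2)$ on the cross-section $\{\omega=0\}$ through $(\phi,\psi)=(\phi^*,0)$ with $\phi^*=\frac{3(k+1-c)}{2a}$, and apply Lemma \ref{le4.2} to get $M(c)=\frac{1}{bc}\int_{-\infty}^{+\infty}\psi\,\frac{\partial\omega}{\partial\tau}\,d\xi$.

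Next I would compute $\frac{\partial\omega}{\partial\tau}$ explicitly. Reading off the $\tau$-coefficient of the third equation of (\ref{equation43}) gives the variational initial value problem $\frac{d}{d\xi}\big(\frac{\partial\omega}{\partial\tau}\big)=-2ac\psi^2-(2a\psi-1)\frac{1}{bc}\big[(c-k-1)\phi+a\phi^2-\omega\big]$ with $\frac{\partial\omega}{\partial\tau}=0$ at $\xi=0$. Evaluating along $\Gamma$, where $\omega\equiv 0$ and the homoclinic relation gives $(c-k-1)\phi+a\phi^2=bc\,\psi'$, the right-hand side collapses to $-2ac\psi^2-2a\psi\psi'+\psi'$. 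As before I would integrate $M(c)$ by parts, the boundary terms vanishing because $\frac{d}{d\xi}(\frac{\partial\omega}{\partial\tau})$ is exponentially small at $\pm\infty$ so that $\frac{\partial\omega}{\partial\tau}$ grows at most sub-exponentially, obtaining $\frac{1}{bc}\int_{-\infty}^{+\infty}\phi\big[2ac\psi^2+2a\psi\psi'-\psi'\big]d\xi$. The one new feature relative to the local case is the term $2a\phi\psi\psi'=a\phi(\psi^2)'$; here I would invoke the reversibility symmetry $(\xi,\psi)\mapsto(-\xi,-\psi)$ of $\Gamma$ (centering its turning point at $\xi=0$) to conclude $\int\psi^3\,d\xi=0$, so this contribution drops out and $M(c)$ reduces to the same closed form $\Delta(c)=\frac{2}{bc}\int_{0}^{\phi^*}(2ac\phi+1)\sqrt{\frac{2a\phi^3-3(k+1-c)\phi^2}{3bc}}\,d\phi$.

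Finally I would run the sign and continuation argument. The integrand is controlled by the factor $2ac\phi+1$: as $c\to(k+1)^+$ we have $\phi^*\to 0$, so $2ac\phi+1>0$ on $(0,\phi^*)$ and $\Delta(c)>0$, whereas as $c\to+\infty$ the term $2ac\phi$ dominates and, since $a<0$, forces $\Delta(c)<0$. Continuity of $\Delta$ in $c$ and the intermediate value theorem then yield a speed $c^*$ with $\Delta(c^*)=0$ and, together with the strictness of the sign change, $\frac{\partial\Delta}{\partial c}\big|_{c^*}\neq 0$. Applying the implicit function theorem to $N(c,\tau)=\tau\Delta(c)+O(\tau^2)$ produces a unique $c=c^*(\tau)$ for each sufficiently small $\tau>0$ at which $N$ vanishes transversally; this is precisely the reconnection of the saddle's invariant manifolds, so the homoclinic orbit $\Gamma_\tau$ persists and lies $C^1\,O(\tau)$-close to $\Gamma$. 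Undoing the change of variables and the traveling-wave ansatz then gives the claimed solitary wave of (\ref{equation6}) with nonlocal delay.

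I expect the main obstacle to be the rigorous sign and transversality analysis of $\Delta(c)$, together with the justification that the extra odd-parity integral genuinely vanishes: one must verify the reversibility symmetry of $\Gamma$ and the sub-exponential growth bound on $\frac{\partial\omega}{\partial\tau}$ carefully, since these are exactly what allow the nonlocal Melnikov function to collapse to the same explicit form as in the local case. Establishing uniqueness of $c^*$, as opposed to mere existence of a zero, is the most delicate point and would, if needed, require a monotonicity estimate for $\Delta(c)$ rather than the crude two-sided sign argument.
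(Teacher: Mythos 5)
Your strategy coincides with the paper's: restrict to the slow manifold, compute the Melnikov integral from Lemma \ref{le4.2} with $\frac{d}{d\xi}\bigl(\frac{\partial \omega}{\partial \tau}\bigr)=-2ac\psi^2-(2a\psi-1)\frac{1}{bc}[(c-k-1)\phi+a\phi^2-\omega]$, integrate by parts, and close with a sign change of $\Delta(c)$ plus the implicit function theorem. The one genuine divergence is your treatment of the new term $\frac{2a}{bc}\int_{-\infty}^{+\infty}\phi\psi\psi'\,d\xi$, which after integration by parts is a multiple of $\int_{-\infty}^{+\infty}\psi^3\,d\xi=\oint_{\Gamma}\psi^2\,d\phi$. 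You discard it by reversibility; the paper keeps it, evaluating $\int_{-\infty}^{+\infty}2a\psi^2\,d\phi$ as $2\int_0^{\phi^*}2a\cdot\frac{2a\phi^3-3(k+1-c)\phi^2}{3bc}\,d\phi\neq 0$, which is what produces the middle term $2a\sqrt{\cdot}$ in (\ref{equation44}), the $\frac{9}{8}c^2$ in numerical Case II, and the local-versus-nonlocal contrast of Remark 3.5. The two evaluations are incompatible, and yours is the correct one: the unperturbed flow on the critical manifold is the Newtonian system $\phi''=\frac{1}{bc}[(c-k-1)\phi+a\phi^2]$, so $\psi$ is odd in $\xi$ about the turning point of $\Gamma$ and $\int_{-\infty}^{+\infty}\psi^3\,d\xi=0$. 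Equivalently, the doubling rule $\int_{-\infty}^{+\infty}h\,d\phi=2\int_0^{\phi^*}h\,d\phi$ used throughout the paper is valid only for integrands odd in $\psi$ (such as $\psi\,k(\phi)$); since $\psi^2=g(\phi)$ is even in $\psi$, the upper and lower branches of the loop are traversed in opposite $\phi$-directions and cancel rather than double. Hence the nonlocal Melnikov function collapses to the local one, $\Delta(c)=\frac{2}{bc}\int_0^{\phi^*}(2ac\phi+1)\sqrt{\frac{2a\phi^3-3(k+1-c)\phi^2}{3bc}}\,d\phi$, exactly as you claim. The conclusion of the theorem survives (both versions of $\Delta$ change sign on $(k+1,+\infty)$), but your computation contradicts (\ref{equation44}) and undercuts the paper's claimed distinction between the two kinds of delay.

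Two further points, both shared with the paper but worth tightening. First, the sign of $\Delta(c)$ as $c\to+\infty$ is not a pointwise statement: $2ac\phi+1$ remains positive near $\phi=0$, so one must argue that the negative part dominates the integral; it does, since the $2ac\phi$ contribution scales like $-c^3$ against $+c$ for the constant term (compare $\Delta(c)=-\frac{72}{35}c^3+\frac{6}{5}c$ in the example $a=-1$, $b=1$, $k=-1$). Second, as you note, a two-sided sign change yields existence of a zero $c^*$ but neither uniqueness nor $\frac{\partial\Delta}{\partial c}\big|_{c^*}\neq 0$; the paper asserts both and verifies them only for the explicit polynomial in the numerical section, so your remark that a monotonicity estimate for $\Delta$ is needed to justify the word ``unique'' in the statement is well taken.
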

\indent We calculate the corresponding  Melnikov integral acrrording to  Lemma 4.1
\begin{equation}
	\begin{aligned}
		M(\phi,\psi)&=\frac{1}{bc}\int_{-\infty}^{+\infty}\psi \frac{ \partial \omega}{\partial \tau}d\xi\\
		&=\frac{1}{bc}\int_{-\infty}^{+\infty} \frac{ \partial \omega}{\partial \tau}d\phi\\
		&=\frac{1}{bc} \frac{ \partial \omega}{\partial \tau}\phi\big|_{-\infty}^{+\infty}-\frac{1}{bc}\int_{-\infty}^{+\infty}
		\phi \frac{d}{d\xi}(\frac{ \partial \omega}{\partial \tau})d\xi \\
		&=-\frac{1}{bc}\int_{-\infty}^{+\infty}-2ac\phi\sqrt{\frac{2a\phi^3-3(k+1-c)\phi^2}{3bc}}d\phi-\frac{1}{bc}\int_{-\infty}^{+\infty}2a\psi^2d\phi+\frac{1}{bc}\int_{-\infty}^{+\infty}\psi d\phi\\
		&=-\frac{2}{bc}\int_{0}^{\phi^*}(-2ac\phi+2a\sqrt{\frac{2a\phi^3-3(k+1-c)\phi^2}{3bc}}-1)\sqrt{\frac{2a\phi^3-3(k+1-c)\phi^2}{3bc}}d\phi\\
		&=\Delta(c),
	\end{aligned}
	\label{equation44}
\end{equation}	
where $\phi^*=\frac{3(k+1-c)}{2a}$ and  $M(\phi,\psi)$ is continuous in $c$. If $c\rightarrow k+1$, then the first term of the integral $(-2ac\phi+2a\sqrt{\frac{2a\phi^3-3(k+1-c)\phi^2}{3bc}}-1)$ is always negative, resulting
the corresponding value of $M(\phi,\psi)$ is positive. If $c\rightarrow +\infty$,   then $(-2ac\phi+2a\sqrt{\frac{2a\phi^3-3(k+1-c)\phi^2}{3bc}}-1)$ is positive, resulting the integral function $M(\phi,\psi)$ is negative. It follows that there exist a $c^*$ such that
\begin{center}
		$\Delta (c)=\Delta (c^*)=0, and$ $\frac{\partial \Delta}{\partial c}\big|_{c^*}\neq 0$.
	\end{center}
 Therefore, for each  $0<\tau\ll1$ , by using implicit function theorem, there exists a  $c=c^*(\tau)$ such that  $M(\phi,\psi)=0$. As a result, the distance function $N(c,\tau)$ has at least one zero solution for every $c>0$. That is, equation ($\ref{equation6}$) with  special nonlocal distributed delay has a solitary wave solution.

\begin{remark}
	The nonlocal delay term $f*u(x,y,t)$ and the perturbation term $\tau u_{xx}$ play  important roles in the existence of solitary wave solutions. The Melnikov integral function without the disturbance of viscous term $\tau u _{xx}$ is given by
	\begin{center}
		$M(\phi,\psi)=-\frac{2}{bc}\displaystyle\int_{o}^{\phi^*}(-2ac\phi+2a\sqrt{\frac{2a\phi^3-3(k+1-c)\phi^2}{3bc}})\sqrt{\frac{2a\phi^3-3(k+1-c)\phi^2}{3bc}}d\phi$.
	\end{center}
Notice that, for $a=-1$, when $c\rightarrow +\infty$, the first term of the integral $(2c\phi-2\sqrt{\frac{-2\phi^3-3(k+1-c)\phi^2}{3bc}})$ is always positive, resulting the integral function $M(\phi,\psi)$ is negative. When $c\rightarrow k+2$, the corresponding Melnikov integral is given by
\begin{equation*}
	\begin{aligned}
		M(\phi,\psi)
		&=\frac{-2}{\sqrt{3b^3(k+2)^3}}\big[\displaystyle\int_{0}^{\phi^*}2(k+2)\phi^2\cdot \sqrt{3-2\phi}d\phi-\displaystyle\int_{0}^{\phi^*}\frac{2}{\sqrt{3b(k+2)}}\phi^2 \cdot (3-2\phi)d\phi\big]\\
		&=\frac{-4}{\sqrt{3b^3(k+2)^3}}\big[\displaystyle\int_{0}^{\frac{3}{2}}(k+2)\phi^2\cdot \sqrt{3-2\phi}d\phi-\displaystyle\int_{0}^{\frac{3}{2}}\frac{1}{\sqrt{3b(k+2)}}\phi^2 \cdot (3-2\phi)d\phi\big]\\
		&=\frac{-4}{\sqrt{3b^3(k+2)^3}}\big[\displaystyle\int_{0}^{\frac{3}{2}}(k+2)\phi^2\cdot \sqrt{3-2\phi}d\phi-\frac{27}{32\sqrt{3b(k+2)}}\big]\\
		&=\frac{-4}{\sqrt{3b^3(k+2)^3}}\big[(k+2)\displaystyle\int_{\sqrt3}^{0}-(\frac{9}{4}t^2+\frac{3}{2}t^4-\frac{1}{4}t^6)dt-\frac{27}{32\sqrt{3b(k+2)}}\big]\\
		&=\frac{-4}{\sqrt{3b^3(k+2)^3}}\big[(k+2)\frac{18\sqrt3}{35}-\frac{27}{32\sqrt{3b(k+2)}}\big].
	\end{aligned}
	\label{equation45}
\end{equation*}	
 It is obvious that $M(\phi,\psi)$ is positive for bounded $k$ and sufficiently small $b$. Therefore, if equation ($\ref{equation6}$) is only affected by the nonlocal delay,  the solitary wave solutions still exist in certain cases. However, from Remark 3.3, we know that if equation ($\ref{equation6}$) is only affected  by local delay, the Melnikov function $M(\phi,\psi)$ has no zero solution. This difference is caused by the different actual background meanings of local and nonlocal delays (see in Conclusion). Without the nonlocal delay, $M(\phi,\psi)$ is calculated to be
	\begin{center}
		$M(\phi,\psi)=\frac{2}{bc}\displaystyle\int_{0}^{\phi^*}\sqrt{\frac{2a\phi^3-3(k+1-c)\phi^2}{3bc}}d\phi$,
	\end{center}
	which  keeps  positive for $c>0$. Similarly, this is consistent with the result of only being disturbed by the viscous term $\tau u_{xx}$ in Remark 3.3. Therefore, due to the combined effect of convolution $f*u$ and disturbance of viscous term $\tau u_{xx}$, a unique solitary wave solution is generated.
\end{remark}
\section{Numerical simulations}
In this section, we provide numerical simulations to validate the theoretical results presented earlier. Taking $b=1,a=-1$ and $k=-1$, we obtain\\
\indent \textbf{Case I.} with local distributed delay\\
\indent The corresponding Melnikov integral with local distributed delay is given by
\begin{equation*}
	\begin{aligned}
		M(\phi,\psi)&=\frac{2}{bc}\int_{0}^{\phi^*}(2ac\phi+1)\sqrt{\frac{2a\phi^3-3(k+1-c)\phi^2}{3bc}}d\phi \\
		&=\frac{2}{c}\int_{0}^{\frac{3}{2}c}(-2c\phi+1)\sqrt{\frac{-2\phi^3+3c\phi^2}{3c}}d\phi \\
		&=\frac{-4}{\sqrt{3c}}\int_{0}^{\frac{3}{2}c}\phi^2\sqrt{3c-2\phi}d\phi+\frac{2}{\sqrt{3c^3}}\int_{0}^{\frac{3}{2}c}\phi \sqrt{3c-2\phi}d\phi\\
		&=- \frac{72}{35}c^3+\frac{6}{5}c\\
		&=\Delta (c),
	\end{aligned}
	\label{equation41}
\end{equation*}
 where $c>0$ and the function diagram of $M(\phi,\psi)=\Delta (c)$   is obtained through numerical simulation (see Figure 2).

\begin{figure*}[h]
	\centering
	\includegraphics{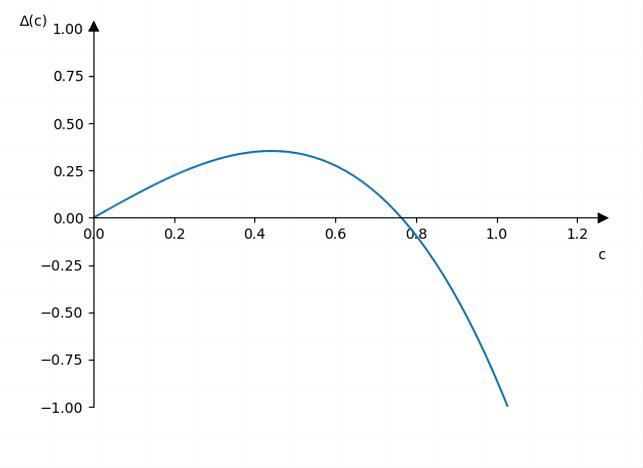}
	\caption{The case of local delay.}
	\label{fig2}
\end{figure*}

It is obvious that function $\Delta (c)=0$ has only one positive solution $c=c^*=\sqrt{\frac{7}{12}}$. By sample calculations, we obtain that

\begin{center}
		$\Delta (c)=\Delta (c^*)=0, and$ $\frac{\partial \Delta}{\partial c}\big|_{c^*}=-\frac{12}{5}<0$.
	\end{center}
	Therefore, for each  $0<\tau\ll1$ , by using implicit function theorem, there exists a  $c=c^*(\tau)$ such that  $M(\phi,\psi)=0$. As a result, the distance function $N(c,\tau)$ has at least one zero solution for every $c>0$. That is, equation ($\ref{equation6}$) with  special local distributed delay has a solitary wave solution.\\
\indent Moreover, through simple calculations, the Melnikov integral function without the local delay is given by
\begin{center}
	$M(\phi,\psi)=\frac{2}{bc}\displaystyle\int_{0}^{\phi^*}\sqrt{\frac{2a\phi^3-3(k+1-c)\phi^2}{3bc}}d\phi=\frac{6}{5}c$.
\end{center}
Notice that it keeps postive for $c>0$.  On the other hand, without disturbance of viscous term $\tau u_{xx}$, the corresponding Melnikov integral function is
\begin{center}
	$M(\phi,\psi)=\frac{2}{bc}\displaystyle\int_{0}^{\phi^*}2ac\phi\sqrt{\frac{2a\phi^3-3(k+1-c)\phi^2}{3bc}}d\phi=- \frac{72}{35}c^3$.
\end{center}
It keeps negative for $c>0$. \\
\indent \textbf{Case II.} with nonlocal distributed delay\\
\indent The corresponding Melnikov integral with nonlocal distributed delay given by
\begin{equation*}
	\begin{aligned}
		M(\phi,\psi)&=-\frac{2}{bc}\int_{0}^{\phi^*}(-2ac\phi+2a\sqrt{\frac{2a\phi^3-3(k+1-c)\phi^2}{3bc}}-1)\sqrt{\frac{2a\phi^3-3(k+1-c)\phi^2}{3bc}}d\phi \\
		&=\frac{2}{c}\int_{0}^{\frac{3}{2}c}(-2c\phi+1)\sqrt{\frac{-2\phi^3+3c\phi^2}{3c}}d\phi+\frac{4}{3c^2}\int_{0}^{\frac{3}{2}c}3c\phi^2-2\phi^3d\phi \\
		&=\frac{-4}{\sqrt{3c}}\int_{0}^{\frac{3}{2}c}\phi^2\sqrt{3c-2\phi}d\phi+\frac{2}{c\sqrt{3c}}\int_{0}^{\frac{3}{2}c}\phi \sqrt{3c-2\phi}d\phi+\frac{4}{3c^2}\int_{0}^{\frac{3}{2}c}3c\phi^2-2\phi^3d\phi \\
		&=- \frac{72}{35}c^3+\frac{6}{5}c+\frac{9}{8}c^2\\
		&=\Delta (c),
	\end{aligned}
	\label{equation42}
\end{equation*}
 where $c>0$ and the function diagram of $M(\phi,\psi)=\Delta (c)$   is obtained through numerical simulation (see Figure 3).\\
 
\begin{figure*}[h]
	\centering
	\includegraphics{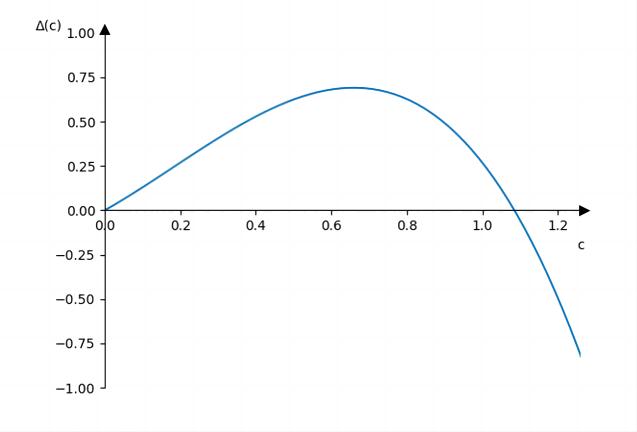}
	\caption{The case of nonlocal delay.}
	\label{fig3}
\end{figure*}
\indent It is obvious that function $\Delta (c)=0$ has only one positive solution $c=c^*$. By same calculations, we obtain that
\begin{center}
		$\Delta (c)=\Delta (c^*)=0, and$ $\frac{\partial \Delta}{\partial c}\big|_{c^*}< 0.$
	\end{center}
	 Therefore, for each  $0<\tau\ll1$ , by using implicit function theorem, there exists a  $c=c^*(\tau)$ such that  $M(\phi,\psi)=0$. As a result, the distance function $N(c,\tau)$ has at least one zero solution for every $c>0$. That is, equation ($\ref{equation6}$) with  special nonlocal distributed delay has a solitary wave solution.\\
 \indent The Melnikov integral function without the disturbance of viscous term $\tau u _{xx}$ is given by
 \begin{equation*}
	\begin{aligned}
		M(\phi,\psi)&=-\frac{2}{bc}\displaystyle\int_{0}^{\phi^*}(-2ac\phi+2a\sqrt{\frac{2a\phi^3-3(k+1-c)\phi^2}{3bc}})\sqrt{\frac{2a\phi^3-3(k+1-c)\phi^2}{3bc}}d\phi \\
		&=\frac{-4}{\sqrt{3c}}\int_{0}^{\frac{3}{2}c}\phi^2\sqrt{3c-2\phi}d\phi+\frac{4}{3c^2}\int_{0}^{\frac{3}{2}c}3c\phi^2-2\phi^3d\phi \\
		&=- \frac{72}{35}c^3+\frac{9}{8}c^2.
	\end{aligned}
	\label{equation42}
\end{equation*}
It is obvious that function $M(\phi,\psi)=\Delta (c)=0$ has only one positive solution $c=c^*=\frac{35}{64}$. On the other hand, without the nonlocal delay, $M(\phi,\psi)$ is calculated to be
\begin{center}
	$M(\phi,\psi)=\frac{2}{bc}\displaystyle\int_{0}^{\phi^*}\sqrt{\frac{2a\phi^3-3(k+1-c)\phi^2}{3bc}}d\phi=\frac{6}{5}c,$
\end{center}
which  keeps  positive for $c>0$.
\section{Conclusion}
In this article, we discuss the solitary wave solutions of the KP-BBM equation in different cases: $(i)$ without delay; $(ii)$ with local delay; $(iii)$ with nonlocal delay.  Firstly, we consider the unperturbed KP-BBM equation through method of dynamical system. Then by using the  GSP theory, especially the invariant manifold theory and  the Melnikov function, the solitary wave solutions of perturbed KP-BBM equation is proved. Then, from Remark 3.3 and Remark 3.5, we find some profound differences between local and nonlocal delays.  If equation ($\ref{equation6}$) is only affected  by local delay, the Melnikov function $M(\phi,\psi)$ has no zero solution. However, if equation ($\ref{equation6}$) is only perturbed by nonlocal delay,  the solitary wave solutions still exist in certain cases.
It is different from the result where the equation only receives the influence of local delay. Because the local delay is only a convolution about time, namely, it is only an integral of time $t$. While  the nonlocal delay is the integration of two variables, time $t$ and space $x$. It is precisely this difference that nonlocal delay does not affect the existence of solitary wave solutions to a certain extent. Moreover, we obtain that due to the combined effect of convolution $f*u$ and disturbance of viscous term $\tau u_{xx}$, a unique solitary wave solution is generated. This is the reason why they must exist simultaneously in the equation. Finally, we validate our results with numerical simulations.
\section{Fund Acknowledgement}
Yonghui Xia was supported by the
Zhejiang Provincial Natural Science Foundation of China (No. LZ24A010006). Hang Zheng was supported by the Young Scientists Fund of the National Natural Science
Foundation of China under Grant (No. 12301207), Natural Science Foundation of Fujian
Province under Grant (No. 2021J011148), Teacher and Student Scientific Team Fund
of Wuyi University (Grant No. 2020-SSTD-003).
\section{Author contributions}
We declare that the authors are ranked in alphabetic order of their names and all of them have the same
contributions to this paper.

\end{document}